\newtheorem{theorem}{Theorem}[section]
\newtheorem{corollary}[theorem]{Corollary}
\newtheorem{lemma}[theorem]{Lemma}
\theoremstyle{definition}
\newtheorem{definition}[theorem]{Definition}
\newtheorem{remark}[theorem]{Remark}
\newtheorem{example}[theorem]{Example}
\newcommand{\Q}{\mathbb{Q}}
\newcommand{\OO}{\mathcal{O}}
\newcommand{\Z}{\mathbb{Z}}
\newcommand{\F}{\mathbb{F}}
\newcommand{\A}{\mathcal{A}}
\newcommand{\p}{\mathfrak{p}}
\newcommand{\N}{\mathbb{N}}
\newcommand{\uu}{\underline}
\newcommand{\Rk}{\operatorname{Rank}}
\def\Gal{\operatorname{Gal}}
\def\lcm{\operatorname{lcm}}
\title{Divisibility sequences related to abelian varieties isogenous to a power of an elliptic curve}
\author{Stefan Barańczuk, Bartosz Naskręcki, Matteo Verzobio}
\date{}
\begin{document}
	
	\maketitle
	\begin{abstract}
		Let $A$ be an abelian variety defined over a number field $K$, $E/K$ be an elliptic curve, and $\phi:A\to E^m$ be an isogeny defined over $K$. Let $P\in A(K)$ be such that $\phi(P)=(Q_1,\dots, Q_m)$ with $\Rk_\Z(\langle Q_1,\dots, Q_m\rangle)=1$. We will study a divisibility sequence related to the point $P$ and show its relation with elliptic divisibility sequences.
	\end{abstract}
	\let\thefootnote\relax\footnotetext{Keywords: Divisibility sequences, abelian varieties, elliptic divisibility sequences, isogenies, primitive divisors.}
	\section{Introduction}
	Let $A$ be an abelian variety defined over a number field $K$ and $P$ be a non-torsion point in $A(K)$. 
	Let $\OO_K$ denote the ring of integers of $K$ and $\A/\OO_K$ be the Néron model for $A/K$.
	
	Let $S$ be a finite set of primes in $K$. For each $n\geq 1$, define the integral ideal $C_n(\A,P,S)$ in $\mathcal{O}_K$ as 
	\[
	C_n(\A,P,S)=\prod_{\substack{\p: nP\equiv O\mod \p\\\p\notin S}} \p
	\]
	where with $nP\equiv O\mod \p$ we mean that $nP$ reduces to the identity in $\A$ reduced modulo $\p$.
	
	The goal of this paper is to find some examples of sequences $C_n(\A,P,S)$ such that, for all but finitely many $n$, $C_n(\A,P,S)$ has a primitive divisor, i.e. there exists a prime $\p$ that divides $C_n$ and does not divide $C_k$ for $k<n$. Notably, it is worth mentioning that the presence of a primitive divisor for almost all values of $n$ remains unaffected if we substitute the set $S$ with another finite set of primes.
	
	In the case when $A$ is an elliptic curve, $P$ is a point on it, and
	the curve $\A$ is defined via its Weierstrass equation, every term
	$C_n(A,P,S)$ is the product of the prime ideals not in $S$ which divide the
	$n$-term of the corresponding elliptic divisibility sequence
	$B_n(E,P)$, defined by the denominators of the $x$-coordinate of the
	point $nP$.
	For the properties of elliptic divisibility sequences, see \cite{phdthesis}. It is well-established that, for all but finitely many values of $n$, $B_n(E,P)$ contains a primitive divisor, as shown in \cite[Proposition 10]{silverman} and \cite[Main Theorem]{ChHa}. 
	
	Since the case when $A$ is an elliptic curve is well understood, we focus on higher-dimension cases.
	Let $A$ be an abelian variety of dimension $\geq 2$ defined over a number field $K$ and let $P\in A(K)$. Assume that $\Z P$ is Zariski dense in $A$. It is conjectured \cite[Proposition 9 and Conjecture 10]{silverman2005generalized} that there are infinitely many $n$ such that $nP$ does not reduce to the identity modulo any prime, outside a finite set of primes $S$. So, $C_n(\A,P,S)$ does not have a primitive divisor for infinitely many $n\in \N$. For a nice potential application of these sequences to Hilbert's tenth problem, see \cite[Remark (i), page 4]{cornelissen}.
	
	Since we are interested in sequences that have a primitive divisor for all but finitely many terms, we will focus on examples where $\Z P$ is not Zariski dense in $A$. More precisely, we will study the following case. Let $A$ be an abelian variety defined over a number field $K$ and let $P\in A(K)$. Assume that there is an elliptic curve $E$ and an isogeny $\phi$, both defined over $K$, such that $\phi:A\to E^m$ and $\phi(P)=(Q_1,\dots, Q_m)$. Assume that $\Rk_\Z(\langle Q_1,\dots, Q_m\rangle)=1$, i.e. the subgroup $\langle Q_1,\dots, Q_m\rangle$ of $E$ is isomorphic to $ \Z\oplus T$ where $T$ is a torsion group. Note that $\Z P$ is not Zariski dense in any subvariety of dimension at least $2$, so in particular it is not Zariski dense when $A$ has dimension at least $2$. We will prove the following.
	\begin{theorem}\label{thm:main}
		Let $A$ be an abelian variety defined over a number field $K$, let $\A/\OO_K$ be the Néron model for $A/K$, and let $P\in A(K)$. Let us assume the following:
		\begin{itemize}
			\item there is an elliptic curve $E$ and an isogeny $\phi$, both defined over $K$, such that $\phi:A\to E^m$ and $\phi(P)=(Q_1,\dots, Q_m)$;
			\item $\Rk_\Z(\langle Q_1,\dots, Q_m\rangle)=1$.
		\end{itemize}
		Then, there exists a finite set of primes $S$ in $K$, an integer $n_1\geq 1$, an elliptic curve $E_0$ defined over $K$, and $Q_0\in E_0(K)$ such that
		\[
		C_n(\A,P,S)=\begin{cases}
			1 \text{ if } n_1\nmid n, \\
			C_{n/n_1}(E_0,Q_0,S) \text{ if } n_1\mid n.
		\end{cases}
		\]
	\end{theorem}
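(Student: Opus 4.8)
The plan is to realise the whole sequence inside a single elliptic curve sitting in $A$, produced from the geometry of the orbit of $P$ rather than from the isogeny itself; the isogeny and the rank hypothesis will only be used to control one dimension. Let $G:=\overline{\langle P\rangle}$ be the Zariski closure in $A$ of the cyclic group generated by $P$, a closed algebraic subgroup defined over $K$ because $\langle P\rangle\subseteq A(K)$ is Galois-stable. Its identity component $G^0=:E_0$ is an abelian subvariety of $A$ defined over $K$, and the component group $\Phi:=G/E_0$ is finite and cyclic, generated by the image of $P$. I would first show $\dim E_0=1$: applying $\phi$ gives a finite surjection $G\to\overline{\langle (Q_1,\dots,Q_m)\rangle}$ onto the Zariski closure of the orbit of $\phi(P)$ in $E^m$, so the two closures have the same dimension; writing $Q_i=a_iR+t_i$ for a generator $R$ of the free part and torsion points $t_i$, the orbit of $(Q_1,\dots,Q_m)$ lies in finitely many translates of the image of $E\to E^m$, $x\mapsto(a_1x,\dots,a_mx)$, which has dimension $1$ since $R$ is non-torsion (and dimension at least $1$ since $\Rk_\Z$ is $1$). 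Hence $E_0$ is an elliptic curve over $K$. I then set $n_1:=\#\Phi$ and $Q_0:=n_1P\in E_0(K)$; by construction $kP\in E_0$ if and only if $n_1\mid k$, and $Q_0$ is non-torsion because $\langle Q_0\rangle$ is dense in $E_0$.

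Next I would fix $S$ to contain the primes of bad reduction of $A$ and of $E_0$, the primes dividing $n_1$, and the finitely many primes at which the closed immersion $E_0\hookrightarrow A$ and the component group $\Phi$ fail to have good reduction; crucially $S$ is independent of $n$. For the case $n_1\nmid n$ I would argue that $C_n(\A,P,S)=1$. If some $\p\notin S$ divided it, then $nP$ would reduce to the identity, hence into the reduction $\widetilde{E_0}$; since $\p\nmid n_1$ and $\p$ is a prime of good reduction, specialisation induces an isomorphism $\Phi\xrightarrow{\sim}\widetilde\Phi$ on the étale component group and is compatible with the projection $G\to\Phi$, so the class of $nP$ in $\Phi$ would vanish, forcing $n_1\mid n$, a contradiction.

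For the case $n=n_1k$ the key identity is $nP=k(n_1P)=kQ_0$ as points of $A(K)$, with $Q_0\in E_0(K)$. For $\p\notin S$ the reduction of $kQ_0$ lands in the subgroup $\widetilde{E_0}\subseteq\widetilde A$, so $kQ_0$ reduces to the identity of $A$ if and only if it reduces to the identity of $E_0$; by the standard dictionary recalled in the introduction this happens exactly when $\p$ divides the $k$-th term $B_k(E_0,Q_0)$ of the elliptic divisibility sequence. Thus the set of primes outside $S$ occurring in $C_n(\A,P,S)$ coincides with that occurring in $C_{k}(E_0,Q_0,S)$, and since both ideals are squarefree products of such primes they are equal, giving $C_n(\A,P,S)=C_{n/n_1}(E_0,Q_0,S)$.

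The main obstacle I expect is not the orbit-dimension computation but the reduction-theoretic bookkeeping: making precise, at good primes away from $n_1$, that specialisation is an isomorphism on the component group $\Phi$ and is compatible with the exact sequence $0\to E_0\to G\to\Phi\to 0$, and that the embedding $E_0\hookrightarrow A$ reduces to an embedding identifying identities. Once these standard facts about Néron models and finite étale group schemes are in place, both cases follow formally, and the only genuine input from the hypotheses is that they force $\dim E_0=1$.
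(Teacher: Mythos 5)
Your proof is correct, but it is not the route taken in the paper's designated proof of Theorem \ref{thm:main}: it is essentially the geometric argument that the paper records separately as Remark \ref{rem:alternativeproof} (an alternative proof suggested by a referee). The paper's own proof is constructive and runs entirely through the isogeny: it writes $\phi(P)=(a_1R+T_1,\dots,a_mR+T_m)$, introduces the dual-type isogeny $\bar{\phi}$ with $\phi\circ\bar{\phi}=d_\phi$, the torsion obstruction $u$, and the sets $G_n$ of Lemma \ref{lemma:Gd}, and then builds $E_0=E/G_0$ and $Q_0$ as the image of $udU'-V_d$, with a Galois-descent step to get $E_0$ and $Q_0$ over $K$. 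What that buys is an explicit value $n_1=ud$ in terms of the isogeny data, which is precisely what Section \ref{sec:ex} and the proof of Corollary \ref{cor:primdiv} rely on (the criterion ``$Q_j=a_jU$ for all $j$ and $G_1\neq\emptyset$'' comes from this machinery). Your route --- Zariski closure $G$ of $\Z P$, identity component $E_0$, component group $\Phi$, $n_1=\#\Phi$, $Q_0=n_1P$ --- buys brevity and transparency: rationality of $E_0$ and $Q_0$ over $K$ is immediate, and both cases of the formula follow from reduction of $0\to E_0\to G\to\Phi\to 0$; the price is that $n_1$, $E_0$, $Q_0$ are not computable from the isogeny, which is exactly the trade-off the paper points out. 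Two remarks: first, you actually fill a gap the remark leaves implicit, since the remark merely asserts the closure is $1$-dimensional while you derive $\dim E_0=1$ from the rank-one hypothesis (the orbit of $\phi(P)$ lies in finitely many translates of the image of $x\mapsto(a_1x,\dots,a_mx)$), and this argument is correct; second, the reduction-theoretic bookkeeping you flag (specialisation isomorphism on $\Phi$ away from $S$) is standard but can be avoided entirely by the remark's more elementary device, namely putting into $S$ the finitely many primes $\p$ with $kR\equiv 0\bmod\p$ for some $1\leq k\leq n_1-1$, where $R$ is the image of $P$ in $A/E_0$, and then using the divisibility $C_n(A,P,S)\mid C_n(A/E_0,R,S)=1$ when $n_1\nmid n$.
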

    Let $Q_0'\in E_0(\overline{K})$ be such that $n_1Q_0'=Q_0$, and $K'$ be a Galois finite extension of $K$ such that $Q_0'\in E_0(K')$. Up to enlarging $S$ in Theorem \ref{thm:main}, we can assume that singular primes of $\A$ and $E_0$, and the prime divisors of the discriminant of $K'/K$ are in $S$. Let $S'$ be the set of primes in $K'$ that are over primes in $S$.
    Whence, we have
    		\[
		C_n(\A,P,S)\OO_{K'}=\begin{cases}
			1 \text{ if } n_1\nmid n, \\
			C_{n}(E_0,Q_0',S') \text{ if } n_1\mid n.
		\end{cases}
		\]
  So, one can see $C_n(\A,P,S)$ as an elliptic divisibility sequence with all entries with index not divisible by $n_1$ blanked out. The elliptic divisibility sequence $C_{n}(E_0,Q_0',S)$ is not defined over $K$ but over the finite extension $K'$.
    
	After the proof of this theorem, we will show how to compute the unique integer $n_1$ of Theorem \ref{thm:main} and we will show some examples.
	
	Regarding the problem of primitive divisors, we will prove the following.
	\begin{corollary}\label{cor:primdiv}
		Let $A$ be an abelian variety defined over a number field $K$, let $\A/\OO_K$ be the Néron model for $A/K$, let $P\in A(K)$, and let $S$ be a finite set of primes. Let us assume the following:
		\begin{itemize}
			\item there is an elliptic curve $E$ and an isogeny $\phi$, both defined over $K$, such that $\phi:A\to E^m$ and $\phi(P)=(Q_1,\dots, Q_m)$;
			\item $\Rk_\Z(\langle Q_1,\dots, Q_m\rangle)=1$.
		\end{itemize}
		Let $d_\phi$ be the degree of $\phi$ and let $\bar{\phi}:E^m\to A$ be such that $\phi\circ \bar{\phi}=d_\phi$.
		Then, $C_n(\A,P,S)$ has a primitive divisor for all but finitely many $n$ if and only if there exists $U'\in E(\overline{K})$ and $a_1,\dots, a_m\in\Z$ such that $\bar{\phi}(a_1U',\dots,a_mU')=P$.
	\end{corollary}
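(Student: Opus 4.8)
The plan is to introduce an auxiliary elliptic curve inside $A$ and to characterize the condition in the statement geometrically. Write $Q_i=c_iR+T_i$, where $R\in E(K)$ generates the free part of $\langle Q_1,\dots,Q_m\rangle$, the $T_i\in E(K)_{\tors}$ are torsion, and $\gcd(c_1,\dots,c_m)=1$ (possible because $R$ lies in the rank-one group generated by the $Q_i$, so it is an integral combination of them). Consider the homomorphism $\sigma\colon E\to E^m$, $U\mapsto(c_1U,\dots,c_mU)$, which is a closed immersion since $\gcd(c_i)=1$, and set $\psi:=\bar\phi\circ\sigma\colon E\to A$. Its image $E_0:=\psi(E)$ is an elliptic curve defined over $K$, and $\psi\colon E\to E_0$ is an isogeny. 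The heart of the argument is the chain of equivalences
\[
C_n(\A,P,S)\ \text{has a primitive divisor for all but finitely many }n \iff P\in E_0(K)\iff \text{the condition on }\bar\phi.
\]

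First I would prove the second equivalence. Given $U'\in E(\overline K)$ and $a_1,\dots,a_m\in\Z$ with $\bar\phi(a_1U',\dots,a_mU')=P$, applying $\phi$ yields $d_\phi a_iU'=Q_i=c_iR+T_i$; passing to $E(\overline K)/\tors$ and using $\gcd(c_i)=1$ forces the vector $(a_1,\dots,a_m)$ to be an integer multiple of $(c_1,\dots,c_m)$ (note $U'$ must be non-torsion, since $P$ is non-torsion). After rescaling $U'$ one may therefore assume $(a_1,\dots,a_m)=(c_1,\dots,c_m)$, i.e. $P=\psi(U')\in\psi(E(\overline K))=E_0(\overline K)$; as $P\in A(K)$ and $E_0$ is defined over $K$, this gives $P\in E_0(K)$. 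The converse is immediate from surjectivity of $\psi$ over $\overline K$. This linear-algebra-modulo-torsion step, and in particular ruling out genuinely different directions of $(a_1,\dots,a_m)$, is the first delicate point.

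Next I would establish the two implications relating $P\in E_0(K)$ to primitive divisors. If $P\in E_0(K)$, then $P$ is a non-torsion point on the elliptic curve $E_0/K$ (non-torsion because $\Rk_\Z\langle Q_i\rangle=1$ forces $P$ non-torsion), and outside a suitably enlarged finite set $S$ the inclusion $E_0\hookrightarrow A$ is compatible with reduction; hence $C_n(\A,P,S)$ coincides with the elliptic divisibility sequence $C_n(E_0,P,S)$, which has a primitive divisor for all but finitely many $n$ by \cite[Proposition~10]{silverman} and \cite[Main Theorem]{ChHa}. Conversely, if $P\notin E_0(K)$, let $\pi\colon A\to A':=A/E_0$ be the quotient isogeny. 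From $d_\phi P=\psi(R)+\bar\phi(T_1,\dots,T_m)$ and $\psi(R)\in E_0=\Ker\pi$ one gets that $d_\phi\,\pi(P)$ is torsion, so $\pi(P)$ is a non-zero torsion point of some order $t>1$. Enlarging $S$ to contain the primes of bad reduction and the primes dividing the relevant torsion orders, reduction is injective on $\langle\pi(P)\rangle$, so $nP\equiv O\bmod\p$ forces $t\mid n$; therefore $C_n(\A,P,S)=1$ whenever $t\nmid n$, and these infinitely many indices carry no primitive divisor. Combining the two implications with the previous step yields the corollary; consistently, this shows that the integer $n_1$ of Theorem~\ref{thm:main} equals $1$ exactly when $P\in E_0(K)$.

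The main obstacle I anticipate is the torsion bookkeeping that pervades every step: arranging $\gcd(c_i)=1$, controlling the torsion contribution $\bar\phi(T_1,\dots,T_m)$ when passing between $A$, $E_0$ and $A/E_0$, and—above all—showing rigorously that any admissible $(a_1,\dots,a_m)$ is parallel to $(c_1,\dots,c_m)$, so that the single curve $E_0$ captures all of the candidate images $\bar\phi(a_1U',\dots,a_mU')$. The reduction-theoretic inputs (injectivity of reduction on torsion, and compatibility of Néron models under $E_0\hookrightarrow A$ and $A\to A/E_0$) hold for all but finitely many primes, which is harmless because, as noted after the definition of $C_n$, the presence of primitive divisors for almost all $n$ is unaffected by enlarging $S$.
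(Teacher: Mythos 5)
Your proposal is correct, but it takes a genuinely different route from the paper. The paper deduces Corollary \ref{cor:primdiv} from the \emph{constructive} proof of Theorem \ref{thm:main}: there, primitive divisors for almost all $n$ are equivalent (via \cite[Proposition 10]{silverman}) to $n_1=ud=1$, which is then unpacked as ``$Q_j=a_jU$ for all $j$ and $G_1\neq\emptyset$'', and Lemma \ref{lemma:G1} (a module-theoretic analysis of $\Ker(\bar{\phi})$ inside $E^m[d_\phi]$) converts the non-emptiness of $G_1$ into the existence of the lift $U'$ with $\bar{\phi}(a_1U',\dots,a_mU')=P$. You instead bypass Theorem \ref{thm:main} entirely: you embed an elliptic curve $E_0=\bar{\phi}(\{(c_1U,\dots,c_mU)\})\subseteq A$ and prove directly that primitive divisors for almost all $n$ hold iff $P\in E_0(K)$ iff the lifting condition, using the quotient $A\to A/E_0$ and injectivity of reduction on torsion for one direction, and the identification of $C_n(\A,P,S)$ with an honest elliptic divisibility sequence on $E_0$ for the other. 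This is very close in spirit to the referee's alternative geometric proof of Theorem \ref{thm:main} recorded in Remark \ref{rem:alternativeproof} (where your $E_0$ is the identity component of the Zariski closure of $\Z P$ and your order $t$ is $n_1$), specialized to the statement of the corollary. Your key extra step --- showing that any admissible vector $(a_1,\dots,a_m)$ must be an integer multiple of $(c_1,\dots,c_m)$ by passing to $E(\overline{K})/\tors$ and using $\gcd(c_i)=1$ --- is correct and replaces the role of Lemma \ref{lemma:G1}. What each approach buys: the paper's argument is nearly free once Theorem \ref{thm:main} is proved constructively, and it keeps explicit control of $n_1$ and of the torsion obstructions $Z_i$; yours is self-contained (given Silverman's primitive divisor theorem), gives a cleaner intrinsic meaning to the corollary's condition ($P$ lies on an elliptic curve sitting inside $A$), but has to re-do the spreading-out/reduction-compatibility arguments for $E_0\hookrightarrow A$ and $A\to A/E_0$, which you correctly flag as harmless since enlarging $S$ does not affect the primitive divisor property.
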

       \begin{remark}
        In the case when $A$ is defined over $\Q$, one can compute the terms of the sequence $C_n(\A,P,S)$ using a recursion formula. This can be done by combining the recurrence relation in \cite[Theorem 1.9]{recurrence} and Theorem \ref{thm:main}.
    \end{remark}
	\section*{Acknowledgements}
     We thank the anonymous referees for several useful comments that improved the exposition of this paper. In particular, we thank one of the referees for suggesting the alternative proof of Theorem \ref{thm:main} given in Remark \ref{rem:alternativeproof}. 
    
	The second author acknowledges the support by Dioscuri program initiated by the Max Planck Society, jointly managed with the National Science Centre (Poland), and mutually funded by the Polish Ministry of Science and Higher Education and the German Federal Ministry of Education and Research.
	
	The third author has received funding from the European Union’s Horizon 2020 research and 
	innovation program under the Marie Skłodowska-Curie Grant Agreement No. 
	101034413.
	
	We are also grateful to the University of Bristol for providing us with the access to Magma cluster CREAM.
	\section{Proof of the main theorem}\label{sec:proof}
	In the sequel, let $A$ be an abelian variety and $E$ be an elliptic curve defined over a number field $K$, and $\A/\OO_K$ be the Néron model for $A/K$.
	\begin{lemma}\label{lemma:Gd}
		Let $\phi':E^m\to A$, let $a_1,\dots, a_m\in \Z$ with $\gcd_i(a_i)=1$, and let $Z_1,\dots, Z_m\in E(\overline{K})$. For each $n\in \N$, define
		\[
		G_n=\{V\in E(\overline{K})\mid \phi'(a_1V+nZ_1,\dots ,a_mV+nZ_m)=O\}.
		\]
		There exists a unique non-negative integer $d$ such that for every integer $n$ the following
		equivalence holds: $G_n\neq \emptyset$ if and only if $d\mid n$. If $G_n= \emptyset$ for all $n\neq 0$, we have $d=0$. Moreover, for each $k\in\Z$, $\#G_{kd}=\#G_0$.
	\end{lemma}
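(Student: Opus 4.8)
The plan is to linearize the condition defining $G_n$ in the variable $V$ and then reduce everything to a torsion statement about a single point of a quotient abelian variety. First I would set $\psi\colon E\to E^m$, $V\mapsto(a_1V,\dots,a_mV)$, and $\alpha:=\phi'\circ\psi\colon E\to A$; both are homomorphisms of abelian varieties, hence so is $\alpha$. Writing $T:=(Z_1,\dots,Z_m)\in E^m(\overline K)$ and $R:=\phi'(T)\in A(\overline K)$ and using additivity of $\phi'$, one rewrites the defining equation $\phi'(a_1V+nZ_1,\dots,a_mV+nZ_m)=O$ as $\alpha(V)=-nR$. Thus $G_n=\alpha^{-1}(-nR)$ is exactly a fiber of $\alpha$ on $\overline K$-points.

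For the existence and value of $d$, I would pass to the image and the quotient. Let $B:=\alpha(E)\subseteq A$, an abelian subvariety; over the algebraically closed field $\overline K$ the homomorphism $\alpha$ maps $E(\overline K)$ onto $B(\overline K)$, so $G_n\neq\emptyset$ if and only if $nR\in B(\overline K)$. Letting $\pi\colon A\to A/B$ be the quotient and $\overline R:=\pi(R)$, this condition becomes $n\overline R=O$. The set $\{n\in\Z:n\overline R=O\}$ is a subgroup of $\Z$, hence equals $d\Z$ for a unique $d\ge 0$, namely the order of $\overline R$ if $\overline R$ is torsion, and $d=0$ otherwise. This yields precisely the equivalence $G_n\neq\emptyset\iff d\mid n$, and $d$ is unique since it is determined by the subgroup $d\Z$. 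The special case is immediate: $O\in G_0$ always, because $\alpha(O)=O$; so $G_0\neq\emptyset$, and if $G_n=\emptyset$ for every $n\neq 0$ then $\{n:G_n\neq\emptyset\}=\{0\}$, forcing $d=0$.

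The cardinality claim then follows from the coset structure of fibers. The set $G_0=\ker\bigl(\alpha\colon E(\overline K)\to A(\overline K)\bigr)$ is a subgroup, and any nonempty fiber $G_n=\alpha^{-1}(-nR)$ equals $V_0+G_0$ for any chosen $V_0\in G_n$; hence $\#G_n=\#G_0$ whenever $G_n\neq\emptyset$. Since $d\mid kd$ for every $k\in\Z$, each $G_{kd}$ is nonempty, so $\#G_{kd}=\#G_0$. (If $\alpha=0$ then $G_0=E(\overline K)$ and the equality is one of infinite cardinals; if $\alpha\neq 0$ then $\alpha$ is an isogeny onto $B$ and $G_0$ is finite.)

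None of the individual steps is hard; the work is in arranging the reduction cleanly. The points needing a little care are the surjectivity of $\alpha$ on $\overline K$-points (valid because a surjective homomorphism of abelian varieties over an algebraically closed field is surjective on points), and the uniform treatment of the torsion and non-torsion cases for $\overline R$ under the convention that $0\mid n$ exactly when $n=0$. I expect the genuine crux to be the passage to the quotient $A/B$, which is what converts the geometric membership condition $nR\in B$ into the purely group-theoretic divisibility condition on $n$. I would also remark that the hypothesis $\gcd_i(a_i)=1$ is not actually needed for this argument: it only guarantees that $\psi$ is injective.
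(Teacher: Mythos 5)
Your proof is correct, but it follows a genuinely different, more structural route than the paper's. The paper stays elementary: since $\phi'$ is a homomorphism, $V_a\in G_a$ and $V_b\in G_b$ imply $V_a+V_b\in G_{a+b}$ (and $-V_a\in G_{-a}$), so $\{n\in\Z : G_n\neq\emptyset\}$ is a subgroup of $\Z$, namely $d\Z$ with $d$ the least positive index such that $G_d\neq\emptyset$; the cardinality claim comes from the translation bijection $V\mapsto V+V_n$ between $G_0$ and any nonempty $G_n$, which is exactly your coset observation in different clothing. What you do differently is to linearize first: setting $\alpha=\phi'\circ\psi\colon E\to A$ and $R=\phi'(Z_1,\dots,Z_m)$, you recognize $G_n=\alpha^{-1}(-nR)$ as a fiber of a homomorphism, then pass to the image $B=\alpha(E)$ and the quotient $A/B$ to identify $d$ as the exact order of the image of $R$ in $(A/B)(\overline{K})$ (with $d=0$ when that image is non-torsion). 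This buys a concrete, intrinsic description of $d$, which the paper's minimal-positive-index definition does not provide and which could in principle help compute the constant $n_1$ of Theorem~\ref{thm:main}; the price is extra machinery (the image abelian subvariety, existence of the quotient $A/B$, surjectivity of $\alpha$ onto $B$ on $\overline{K}$-points), all standard but absent from the paper's short argument, which needs nothing beyond the group law and would apply verbatim to any homomorphism of abelian groups. Your closing remark is also accurate: the hypothesis $\gcd_i(a_i)=1$ is not used in the paper's proof of this lemma either; it only becomes relevant in Lemma~\ref{lemma:bij}, where it forces $G_n\subseteq E(\overline{K})[d_\phi]$.
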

	\begin{proof}
        If $G_n=\emptyset$ for all $n\geq 1$, the lemma is trivial. So, we assume that there exists $n\geq 1$ such that $G_n$ is not empty.
 
		Note that if $G_a$ and $G_b$ are not empty, then $G_{a+b}$ is not empty. Indeed, if $V_a\in G_a$ and $V_b\in G_b$, then $V_a+V_b\in G_{a+b}$. Let $d>0$ be the minimum positive integer such that $G_d$ is not empty. Then $G_n$ is not empty if and only if $n=kd$. If $G_n$ is not empty, then $\exists V_n\in G_n$ and there is a bijection between $G_0$ and $G_n$ given by $V\to V+V_n$.
	\end{proof}
	\begin{remark}
        The subset $G_0$ is always a subgroup of $E$ but in general the set $G_n$ does not form a group with the standard operations in $E$.
	\end{remark}
	\begin{definition}
		Let $\phi:A\to E^m$ be an isogeny. Let $d_\phi$ be the degree of the isogeny and $\bar{\phi}:E^m\to A$ be the unique isogeny such that $\phi\circ \bar{\phi}=d_\phi$. Such an isogeny exists due to \cite[Section II.7]{mumford1970abelian}. Hence, $\deg(\bar{\phi})=(d_\phi)^{2m-1}$ since $\deg(d_\phi)=(d_\phi)^{2m}$ by \cite[Theorem A.7.2.7]{hindry2013diophantine}.
	\end{definition}
	\begin{lemma}\label{lemma:bij}
		Let $\phi:A\to E^m$ and $\p$ be a prime in $K$ where $E$ has good reduction. Assume that $Z_1,\dots, Z_m\in E(\overline{K})$ are torsion points of order that divides $d_\phi$. Let $a_1,\dots, a_m\in \Z$ with $\gcd_i(a_i)=1$.
		If $\p$ does not divide $d_\phi$, there is a bijection between $G_n$, defined as 
        \begin{equation}\label{eq:Gnlem}
        \{V\in E(\overline{K})\mid \bar{\phi}(a_1V+nZ_1,\dots ,a_mV+nZ_m)=O\in E(\overline{K})\},
        \end{equation}
        and \[\{V\in E(\overline{\F_\p})\mid \bar{\phi}(a_1V+nZ_1,\dots ,a_mV+nZ_m)=O\in E(\overline{\F_\p})\}.\]
	\end{lemma}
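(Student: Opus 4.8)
The plan is to show that both sets appearing in the claimed bijection consist entirely of $d_\phi$-torsion points of $E$, and that reduction modulo $\p$ restricts to a bijection on $d_\phi$-torsion which carries the defining condition of $G_n$ to the defining condition of its analogue over $\overline{\F_\p}$. The whole difficulty is then concentrated in checking that reduction interacts correctly with $\bar\phi$.

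\emph{Step 1 (every admissible $V$ is $d_\phi$-torsion).} First I would exploit $\phi\circ\bar\phi=[d_\phi]$: for any $W\in\Ker\bar\phi$ one has $[d_\phi]W=\phi(\bar\phi(W))=O$, so $\Ker\bar\phi\subseteq E^m[d_\phi]$. Hence if $V\in G_n$, then $W=(a_1V+nZ_1,\dots,a_mV+nZ_m)\in\Ker\bar\phi$ forces each coordinate $a_iV+nZ_i$ to lie in $E[d_\phi]$. As the $Z_i$ have order dividing $d_\phi$, this gives $a_iV\in E[d_\phi]$, i.e. $[d_\phi a_i]V=O$, for every $i$; therefore the order of $V$ divides $\gcd_i(d_\phi a_i)=d_\phi\gcd_i(a_i)=d_\phi$, so $V\in E[d_\phi]$. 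The identical argument over $\overline{\F_\p}$, using that the reduction of $\phi\circ\bar\phi$ is again $[d_\phi]$, shows the second set is contained in $E[d_\phi](\overline{\F_\p})$.

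\emph{Step 2 (reduction is a bijection on $d_\phi$-torsion).} Since $E$ has good reduction at $\p$ and $\p\nmid d_\phi$, the residue characteristic is prime to $d_\phi$, so reduction induces an injection $E[d_\phi](\overline K)\hookrightarrow E[d_\phi](\overline{\F_\p})$; comparing cardinalities (both equal $d_\phi^2$) shows it is a bijection, under which each $Z_i$ maps to its reduction. This is the candidate bijection, and in view of Step 1 it suffices to check that it matches the two defining conditions.

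\emph{Step 3 (transporting the condition).} The isogeny $\bar\phi$ is defined over $K$, and $A$ has good reduction at $\p$ because good reduction is an isogeny invariant (Néron–Ogg–Shafarevich) and $A$ is isogenous to $E^m$. Extending $\bar\phi$ to the Néron models yields a morphism compatible with reduction, so for $W\in E^m[d_\phi](\overline K)$ the reduction of $\bar\phi(W)$ equals the reduced isogeny applied to the reduction of $W$. Since $\bar\phi$ sends $d_\phi$-torsion to $d_\phi$-torsion and reduction is injective on $A[d_\phi]$ for the same prime-to-$p$ reason, $\bar\phi(W)=O$ holds if and only if its reduction is $O$. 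Chaining these equivalences, $V\in G_n$ precisely when its reduction lies in the corresponding set over $\overline{\F_\p}$, so Step 2 delivers the desired bijection. I expect Step 3 to be the main obstacle: one must be sure that reduction genuinely commutes with $\bar\phi$ and that $\Ker\bar\phi$ reduces isomorphically onto the kernel of the reduced isogeny, which is exactly where $\p\nmid d_\phi$ and good reduction are needed, as they make $E^m[d_\phi]$ and $\Ker\bar\phi$ finite étale group schemes over $\OO_{K,\p}$ on which reduction is an isomorphism.
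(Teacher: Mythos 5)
Your proposal is correct and takes essentially the same route as the paper: first show $G_n\subseteq E(\overline{K})[d_\phi]$ by applying $\phi\circ\bar\phi=[d_\phi]$ to the tuple $(a_1V+nZ_1,\dots,a_mV+nZ_m)$ and using $\gcd_i(a_i)=1$, then conclude via the bijectivity of reduction modulo $\p$ on $d_\phi$-torsion when $\p\nmid d_\phi$. Your Step 3 simply makes explicit the compatibility of reduction with $\bar\phi$ (via Néron models and isogeny invariance of good reduction), a point the paper leaves implicit in its citation of \cite[Thm C.1.4]{hindry2013diophantine}.
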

	\begin{proof}
		Notice that $G_n\subseteq E(\overline{K})[d_\phi]$. Indeed, for $V\in G_n$,
		\begin{align*}
			(d_\phi a_1V,\dots ,d_\phi a_mV)&=d_\phi(a_1 V+nZ_1,\dots, a_m V+nZ_m)\\&=\phi\circ \bar{\phi}(a_1 V+nZ_1,\dots, a_m V+nZ_m)\\&=\phi(O)\\&=(O,\dots,O).
		\end{align*}
		Hence, $d_\phi V=O$ since $\gcd(a_i)=1$.
		We conclude by observing that there is a bijection between $E(\overline{K})[d_\phi]$ and the reduction modulo $\p$ of $E[d_\phi]$, cf. 
		\cite[Thm C.1.4]{hindry2013diophantine}, since $\p$ does not divide $d_\phi$.
	\end{proof}
    We are now ready to prove Theorem \ref{thm:main}. The techniques involved are related to those used in \cite{baranczuk2014remark}. In Remark \ref{rem:alternativeproof}, we show how to give a more geometric proof. Our argument here is more constructive, which will be useful later in Section \ref{sec:ex}. For readers primarily interested in the result and not in the computational details, we recommend consulting the alternative proof in Remark \ref{rem:alternativeproof}, which is shorter and easier to follow. We begin by sketching the overall strategy. The proof is divided into three parts:

    \begin{enumerate}[leftmargin=*]
        \item By assumption, $\phi(P)=(a_1R+T_1,\dots ,a_mR+T_m)$ for $R\in E(K)$ a non-torsion point and $T_i\in E(K)$ torsion points. We show that, outside finitely many primes, $nP\equiv O\mod \p$ implies $nT_i=0$ for all $1\leq i\leq m$. So, if $n$ is not a multiple of the order of $T_i$ for all $i$, then $C_n(\A,P,S)=1$. In the case when $n$ is a multiple of the order of $T_i$ for all $i$, it is easy to see that this case is equivalent to the case $T_1=T_2=\dots =T_m=O$ and so $\phi(P)=(a_1R,\dots ,a_mR)$.
        \item If $\phi(P)=(a_1R,\dots ,a_mR)$, we have $\overline{\phi}(a_1U'+Z_1,\dots ,a_mU'+Z_m)=P$ for $U'\in E(\overline{K})$ a non-torsion point and $Z_i\in E(\overline{K})$ torsion points of order dividing the degree of $\phi$. We show that, if $G_n$, as defined in \eqref{eq:Gnlem}, is empty, then $C_n(\A,P,S)=1$. This follows from the fact that $nP\equiv O\mod \p$ implies $nU'\in G_n$ modulo $\p$, and this cannot happen by Lemma \ref{lemma:bij}. So, we focus on the case when $G_n$ is non-empty and, after replacing $n$ with an appropriate multiple, we can assume $G_1$ is non-empty.
        \item Let $V_1\in G_1$ and notice $nV_1\in G_n$. By definition, $nP\equiv O\mod\p$ if and only if $nU'\in G_n$ modulo $\p$, or equivalently if and only if $n(U'-V_1)\in G_0$ modulo $\p$. So, if $E_0=E/G_0$ and $Q_0=U'-V_1\in E_0$, then $nP\equiv O\mod\p$ if and only if $nQ_0\equiv O\mod\p$ in $E_0(\F_\p)$. We conclude that $C_n(\A,P,S)=C_n(E_0,Q_0,S)$.
    \end{enumerate}
	\begin{proof}[Proof of Theorem \ref{thm:main}]
		Since the rank of the subgroup of $E(K)$ generated by the points $Q_{1}, \ldots, Q_{m}$ is equal to $1$, there exist a non-torsion point $R \in E(K)$, some integers $a_{1}, \ldots, a_{m}$ with $\gcd (a_{1}, \ldots, a_{m}) =1$, and torsion points $T_{i} \in E(K)$ such that, for every $i = 1, \ldots, m$,
		\begin{equation}\label{formula for Qi}
			Q_{i}=a_{i}R+T_{i}.
		\end{equation}
		Put 
		\begin{equation*}
			U=\sum_{i=1}^m b_{i} Q_{i}, 
		\end{equation*}
		where $b_{1}, \ldots, b_{m}$ are integers such that $\sum a_{i}b_{i} = \gcd (a_{1}, \ldots, a_{m})=1$. 
		Let us introduce the following notation for the sake of simplification of the exposition. Put
		\begin{equation*}
			\uu{Q}=[Q_{1}, \ldots, Q_{m}], \, \uu{a}=[a_{1}, \ldots, a_{m}],\,  \uu{T}=[T_{1}, \ldots, T_{m}], \,
			\uu{b}=\left[ 
			\begin{array}{c}
				b_{1}\\
				\vdots\\
				b_{m}
			\end{array}\right] .
		\end{equation*} 
		So,
		\begin{equation}\label{eq:defQ}
			\uu{Q}=\uu{a}R+\uu{T} 
		\end{equation}
		and
		\begin{equation}\label{eq:defU}
			U=\uu{Q}\cdot\uu{b}=\uu{a}\cdot\uu{b}R+\uu{T}\cdot\uu{b}=R+\uu{T}\cdot\uu{b}.  
		\end{equation}
		Let $\p$ be a prime of good reduction. If $nP\equiv O\mod \p$, then $nQ_i\equiv O\mod \p$ for each $i$ since $\phi(nP)=(nQ_1,\dots ,nQ_m)$. Hence, $nU\equiv O\mod \p$ because $U=\uu{Q}\cdot\uu{b}$. Moreover, by \eqref{eq:defQ} and \eqref{eq:defU}, \[
		\uu{Q}-\uu{a}U=\uu{a}R+\uu{T}-\uu{a}R-\uu{a}(\uu{T}\cdot\uu{b})=\uu{T}-\uu{a}(\uu{T}\cdot\uu{b}),
		\] 
        so $Q_j-a_jU$ is a torsion point for all $1\leq j\leq m$.
        Let $u_j>0$ be the order of $Q_j-a_jU$ and \begin{equation}
            \label{eq:u}
            u=\lcm_{1\leq j\leq m}\{u_j\}.
        \end{equation}
		
		Let $S$ contain the primes where $A$ and $E$ have bad reduction, and the primes that divide $d_\phi$ and $u$. Recall that a non-trivial $u$-torsion point is not the identity modulo a prime that does not divide $u$ (see \cite[Theorem VIII.7.1]{arithmetic}). Hence, if $n(Q_j-a_jU)\neq O$ and $n(Q_j-a_jU)\equiv O\mod \p$, then $\p$ must be in $S$ since $n(Q_j-a_jU)$ is a $u$-torsion point.
		
		If $nP\equiv O\mod \p$ for $\p\notin S$ and $u\nmid n$, then there exists $j$ such that $n(Q_j-a_jU)\neq O$. As we said, $n(Q_j-a_jU)\equiv O\mod \p$ and this contradicts the hypothesis $\p\notin S$. Therefore, if $u\nmid n$, then $C_n(\A,P,S)=1$. So, if $nP\equiv O\mod \p$ for $\p\notin S$, then $u\mid n$. From now on, we will focus on this case since we already know that $C_n(\A,P,S)=1$ if $u\nmid n$.
		
		Let $n$ be a multiple of $u$ and put $n_2=n/u$. Therefore, $nQ_j=na_jU+n(Q_j-a_jU)=na_jU$ since $n(Q_j-a_jU)=O$. Thus,
		\[
			n\uu{Q}=\uu{a}(nU).
		\]
		
		Let $\uu{Q}'=(Q_1',\dots ,Q_m')\in E^m(\overline{K})$ be such that $\bar{\phi}(\uu{Q}')=P$. Note that $d_\phi(n\uu{Q}')=\uu{a}(nU)$ since 
		\[
		d_\phi(n\uu{Q}')=\phi\left(\bar{\phi}(n\uu{Q}')\right)=\phi(nP)=n\uu{Q}=\uu{a}(nU).
		\]
		Let $U'\in E(\overline{K})$ be such that $d_\phi(U')=U$. Therefore, for each $j$, $u\uu{Q}_j'-a_juU'$ is a torsion point of order that divides $d_\phi$ since
		\[
		d_\phi(u\uu{Q}_j'-a_juU')=a_j(uU)-a_j(uU)=O.
		\] 
		Hence, there exists $\uu{Z}=[Z_1,\dots, Z_m]$ with $Z_i\in E(\overline{K})[d_\phi]$, such that
		\begin{equation}\label{eq:U'P}
			\bar{\phi}(\uu{a}(nU')+n_2\uu{Z})=\bar{\phi}(n\uu{Q}')=nP.
		\end{equation}
		
		Let 
		\begin{equation}\label{eq:Gn}
			G_n=\{V\in E(\overline{K})\mid \bar{\phi}(a_1V+nZ_1,\dots, a_mV+nZ_m)=O\}.  
		\end{equation}
		As we proved in Lemma \ref{lemma:bij}, given $\p\notin S$, $G_n$ is bijective to its reduction modulo $\p$. Let $d$ be the smallest positive integer such that $G_d$ is not empty. Note that $O\in G_{{d_\phi}}$ since $Z_i\in E(\overline{K})[d_\phi]$ and so, by Lemma \ref{lemma:Gd}, $d\mid d_\phi$.
		
		Assume $d\nmid n_2$ and $nP\equiv O\mod \p$ for $\p\notin S$. Then, \[\bar{\phi}(a(nU')+n_2\uu{Z})=nP\equiv O\mod \p\] and $nU'$ belongs to the reduction modulo $\p$ of $G_{n_2}$. So, $G_{n_2}$ modulo $\p$ is not empty and this contradicts the hypothesis that $d\nmid n_2$ since $G_{n_2}$ must be empty. Hence, if $d\nmid n_2$, $C_n(\A,P,S)=1$. Therefore, we focus on the case $d\mid n$. Put $n_3=n_2/d$ (and so $n=udn_3$).
		
		Since $G_d$ is not empty, let $V_d\in G_d$ and $W=(udU'-V_d)\in E(\overline{K})$. Note that $(n/d)V_d\in G_n$ since $V_d\in G_d$. Let $E_0=E/G_0$ and $Q_0\in E(\overline{K})$ be the image of $W$ under the isogeny $E\to E_0$. 
		Notice that there exists a finite field extension $K'$ of $K$ such that $E_0$ and $Q_0$ are defined over $K'$. Let $S'$ be the set of primes in $\mathcal{O}_{K'}$ that are over the primes in $S$.
		
		Let $\p'\notin S'$. We have $nP\equiv O\mod \p'$ if and only if \[\bar{\phi}(\uu{a}(nU')+n_2\uu{Z})=nP\equiv O\mod \p'\] by Equation \eqref{eq:U'P}. Moreover, $\bar{\phi}(\uu{a}(nU')+n_2\uu{Z})\equiv O\mod \p'$ if and only if $nU'\in G_{n_2}=(n_2/d)V_d+G_0=n_3V_d+G_0$ in the reduction modulo $\p'$. So, if $\p'\mid C_n(\A,P,S)\mathcal{O}_{K'}$, then $n_3W=n_3(udU'-V_d)=nU'-n_3V_d\in G_0$. Thus, $n_3Q_0$ reduces to the identity modulo $\p'$. Therefore, $\p'\mid C_{n_3}(E_0,Q_0,S')$.
		
		If $\p'\mid C_{n_3}(E_0,Q_0,S')$, then $nU'-n_3V_d=n_3W\in G_0$ modulo $\p'$. Therefore, $nU'\in G_{n_2}=(n_3)V_d+G_0$ modulo $\p'$ and then $nP=\bar{\phi}(n\uu{a}U'+n_2\uu{Z})=O\mod \p'$. Thus, $\p'\mid C_n(\A,P,S)\mathcal{O}_{K'}$. In conclusion,
		\[
		C_n(\A,P,S)\mathcal{O}_{K'}=C_{n_3}(E_0,Q_0,S')=C_{\frac{n}{ud}}(E_0,Q_0,S').
		\]
		
		To conclude the proof, we just need to show that $E_0$ and $Q_0$ are defined over $K$. Recall that $E_0$ and $Q_0$ are defined over $K'$. We can assume that $K'/K$ is a Galois extension. Note that $G_0$ is $\Gal(K'/K)$-invariant since, if $V\in G_0$, then
		\[
		\bar{\phi}(a_1V^\sigma,\dots, a_mV^\sigma)=\bar{\phi}(a_1V,\dots, a_mV)^\sigma=O
		\]
		for all $\sigma\in \Gal(K'/K)$.
		Therefore, by \cite[Remark III.4.13.2]{arithmetic}, $E_0=E/G_0$ is defined over $K$. Moreover,
		\[
		\bar{\phi}(\uu{a}W)=\bar{\phi}(\uu{a}udU'-\uu{a}V_d)=\bar{\phi}(\uu{a}udU'+d\uu{Z})=udP
		\]
		since $V_d\in G_d$ and by \eqref{eq:U'P}. Therefore, for all $\sigma\in \Gal(K'/K)$,
		\[
		\bar{\phi}(\uu{a}(W-W^\sigma))=\bar{\phi}(\uu{a}(W))-\bar{\phi}(\uu{a}(W))^\sigma=udP-udP^\sigma=O
		\]
		and then $W-W^\sigma\in G_0$. Since $Q_0$ is the image of $W$ under the isogeny $E\to E/G_0$, we have $Q_0^\sigma=Q_0$ for all $\sigma\in \Gal(K'/K)$. Then, $Q_0$ is defined over $K$.
	\end{proof}
	\begin{remark}
		Notice that the constant $n_1$, as defined in the statement of Theorem \ref{thm:main}, is equal to $ud$, where $u$ is defined in \eqref{eq:u} and $d$ is defined just after \eqref{eq:Gn}.
	\end{remark}
 \begin{remark}
           Observe that if $A$ is just $E^{m}$, and $P=(Q_1,\dots, Q_m)\in E^{m}(K)$, then we immediately get the following: there exists a finite set of primes $S$ in $K$, an integer $u\geq 1$, and $Q_0\in E(K)$ such that
\[
C_n(\A,P,S)=\begin{cases}
1 \text{ if } u\nmid n, \\
C_{n}(E,Q_0,S) \text{ if } u\mid n.
\end{cases}
\]
 \end{remark}
 \begin{remark}\label{rem:alternativeproof}
     As suggested by one of the anonymous referees, one can give an alternative, more geometric proof of Theorem \ref{thm:main}. However, in comparison to the proof written above we cannot directly extract the value $n_1$ from this alternative proof.
     
     Let $P$ be a fixed $K$-rational point on $A$. Let $V$ be the Zariski closure of the set $\Z P$. This is a $1$-dimensional algebraic subgroup of $A$, hence is smooth and is the union of disjoint irreducible components over $\overline{K}$. Let $E_0$ be the component containing $O = 0P$. As $E_0$ contains $O$, it is defined over $K$. Moreover, $V$ is the union of the varieties $kP + E_0$ where $k$ ranges over $\Z$. Let $n_1$ be the smallest positive integer such that $n_1P\in E_0$. We get that $V$ is the disjoint union of $E_0$, $P+E_0$, $2P+E_0$, ..., $(n_1-1)P + E_0$ and we let $Q_0 = n_1P \in E_0(K)$.

Let $R$ be the image of $P$ in the quotient abelian variety $A / E_0$. The point $R$ has finite order. Choose defining equations of $A, E_0\subset A$, and $A / E_0$ in such a way that the map $A \to A / E_0$ is just a restriction to the first so many coordinates and let $S$ contain all primes of bad reduction of these chosen sets of equations, as well as all primes $\p$ for which there is a $k \in \{1,2,...,n_1-1\}$ with $kR \equiv 0\mod \p$.

Then for all primes outside $S$ and all $n$ with $n_1 \nmid n$, we have \[C_n(A, P, S) \mid C_n(A/E_0, R, S) = 1.\]
Moreover, if $n_1 \mid n$, then $nP = (n/n_1)Q_0$, hence $C_n(A, P, S) = C_{n/n_1}(E_0, Q_0, S)$. 
 \end{remark}
	\begin{corollary}
		Let $A$ be an abelian variety defined over a number field $K$, let $\A/\OO_K$ be the Néron model for $A/K$, and let $P\in A(K)$ be a non-torsion point. Assume that there is an elliptic curve $E$ and an isogeny $\phi$, both defined over $K$, such that $\phi:A\to E^m$. Assume $\Rk_K(E)=1$. Then, there exists a finite set of primes $S$, an integer $n_1$, an elliptic curve $E_0$ defined over $K$, and $Q_0\in E_0(K)$ such that
		\[
		C_n(\A,P,S)=\begin{cases}
			1 \text{ if } n_1\nmid n, \\
			C_{n/n_1}(E_0,Q_0,S) \text{ if } n_1\mid n.
		\end{cases}
		\]
	\end{corollary}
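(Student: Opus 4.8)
The plan is to reduce this statement to Theorem \ref{thm:main} by verifying that its rank hypothesis is met under the present assumptions. Writing $\phi(P)=(Q_1,\dots,Q_m)$ with each $Q_i\in E(K)$, the subgroup $\langle Q_1,\dots,Q_m\rangle$ is contained in $E(K)$, and since $\Rk_K(E)=1$ we obtain immediately that $\Rk_\Z(\langle Q_1,\dots,Q_m\rangle)\leq 1$. First, then, I would record this upper bound; the only remaining task is to exclude the possibility that this rank is $0$, that is, that every $Q_i$ is a torsion point.

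The key step is to show that $P$ being non-torsion forces $\phi(P)$ to be non-torsion, which is exactly where the isogeny hypothesis is used. Indeed, $\phi$ has finite kernel, so if $\phi(P)$ had finite order $N$, then $\phi(NP)=N\phi(P)=O$ would give $NP\in\Ker(\phi)$, a finite group; hence some multiple of $NP$ would vanish, making $NP$, and therefore $P$, torsion, contrary to assumption. Consequently not all of the $Q_i$ can be torsion, and so $\Rk_\Z(\langle Q_1,\dots,Q_m\rangle)\geq 1$.

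Combining the two bounds yields $\Rk_\Z(\langle Q_1,\dots,Q_m\rangle)=1$, so both hypotheses of Theorem \ref{thm:main} hold, and its conclusion applies verbatim, producing the finite set of primes $S$, the integer $n_1$, the elliptic curve $E_0$ defined over $K$, and the point $Q_0\in E_0(K)$ with the stated case distinction. I expect no genuine obstacle here: the argument is a short bookkeeping reduction, and the only point requiring any care is the elementary observation that an isogeny cannot carry a non-torsion point to a torsion point, which follows directly from the finiteness of $\Ker(\phi)$.
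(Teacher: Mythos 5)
Your proposal is correct and follows exactly the paper's own route: bound the rank above by $\Rk_K(E)=1$, use the fact that an isogeny (having finite kernel) cannot send the non-torsion point $P$ to a torsion tuple to get the lower bound, and then invoke Theorem \ref{thm:main}. The paper's proof is essentially a two-line version of yours, leaving implicit the finite-kernel argument that you spell out.
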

	\begin{proof}
		We have $\phi(P)=(Q_1,\dots, Q_m)$ and $\Rk(\langle Q_1,\dots, Q_m\rangle)\leq \Rk_K(E)=1$. Since $P$ is a non-torsion point, we have $\Rk(\langle Q_1,\dots, Q_m \rangle)=1$. Hence, we apply Theorem \ref{thm:main}.
	\end{proof}
	\section{Some considerations and examples}\label{sec:ex}
	By the proof of Theorem \ref{thm:main}, $C_n(\A,P,S)$ is equal to $C_n(E,Q_0,S)$ if and only if $Q_j=a_jU$ for each $1\leq j\leq m$ (see \eqref{eq:u}) and $G_1$ is not empty (see Equation \eqref{eq:Gn}). In the next example, we show a case when this happens.
	\begin{example}
		The isogeny of this example is taken by \cite[Proposition 4]{examples}.
		Let $a_0=0$, $a_1=1$, and $a_2=-9$. Let $E$ be defined by $y^2=(x-a_1)(x-a_2)(x-a_3)$ and take $Q'=(9,-36)\in E(\Q)$. There is an isogeny $\phi':E^2\to J$ with $J$ the Jacobian of the hyperelliptic curve
		\[
		y^2 = 30233088x^6 +
		305690112x^4 + 305690112x^2 + 30233088,
		\]
		with $\deg\phi'=4$, and $\ker \phi'\subseteq E^2[2]$. So, there exists $\phi:J\to E^2$ with $\phi'\circ \phi=2$. Following the proof of \cite[Proposition 4]{examples},
		\[
		\phi'(Q',Q')=P=\left(x^2 + \frac{64}{7}x + 1, \frac{23639040}{49}x + \frac{414720}{7}\right)\in J(\Q)
		\]
		where we are using the Mumford representation for the points on the Jacobian of a hyperelliptic curve.
		Note that,
		\[
		\phi(P)=\phi(\phi'(Q',Q'))=(2Q',2Q')=(Q,Q)
		\] where $Q=2Q'=(25/16, -195/64 )\in E(\Q)$. 
		
		Since $\deg(\phi)=4$, there exists $\bar{\phi}$ such that $\bar{\phi}\circ \phi=4$ and we can take $\bar{\phi}=2\phi'$. Let $\overline{Q}\in E(\overline{\Q})$ be such that $Q'=2\overline{Q}$ and so $\bar{\phi}(\overline{Q},\overline{Q})=\phi'(Q',Q')=P$. Therefore, defining $G_n$ as in the proof of Theorem \ref{thm:main} (see in particular \eqref{eq:Gn}), we have $G_0=\{V\in E(\overline{\Q })\mid \bar{\phi}(V,V)=O\}$ and $G_1=G_0$. One can easily check that $G_0$ is the group of points $R\in E(\overline{\Q})$ such that $2R=(0,0)$ or $2R=O$. Using MAGMA \cite{Magma}, we can compute that $E_0=E/G_0$ is the elliptic curve $y^2 = x^3 + 8x^2 + 36x + 288$ and $Q_0=(8,-40)\in E_0(\Q)$, where $Q_0$ is the image of $\overline{Q}$ under the isogeny $E\to E_0$. Hence, replicating the work in the proof of Theorem \ref{thm:main}, we have $C_n(J,P,S)=C_n(E_0,Q_0,S)$, where $S=\{2\}$. Using MAGMA,
		we compute the first terms of the two sequences.
		\begin{center}
			\begin{tabular}{|c|c| c|}
				\hline
				n&$C_n(J,P,S)$&$C_n(E,Q_0,S)$\\
				\hline
				$1$&$1$&$1$ \\
				\hline
				2&$1$&$1$  \\
				\hline
				3&$7\cdot17\cdot41 $&$7\cdot17\cdot41 $ \\
				\hline
				4&$13\cdot29\cdot101$&$13\cdot29\cdot101$  \\
				\hline
				5&$103\cdot113\cdot1087\cdot2377$ &$103\cdot113\cdot1087\cdot2377$ \\
				\hline
				6&$7\cdot11\cdot17\cdot41\cdot89\cdot2713\cdot8329 $&$7\cdot11\cdot17\cdot41\cdot89\cdot2713\cdot8329 $\\
				\hline
				7&$23\cdot23497\cdot156671\cdot48883577521$&$23\cdot23497\cdot156671\cdot48883577521$\\
				\hline
			\end{tabular}
		\end{center}
		In particular, by \cite[Proposition 10]{silverman}, $C_n(J,P,S)$ has a primitive divisor for all but finitely many terms. Note that this agrees with Corollary \ref{cor:primdiv}.
	\end{example}
	\begin{remark}
		Notice that the abelian variety of the previous example is not isomorphic to the square of an elliptic curve. Indeed, there is no genus $2$ curves on $E^2$. To prove this, it is sufficient to check that $E$ does not have complex multiplication, as shown in \cite[Theorem 1]{kani2014jacobians}. The endomorphism ring of $E$ is $\Z$ and it is computed in the LMFDB database \cite{lmfdb}.
	\end{remark}
	One may wonder if $G_1$ can be empty. In the next remark, we show that this can happen.
	\begin{remark}
		Let $K$ be a number field, and let $E/K$ be an elliptic curve with $E(K)[2]=\langle T_1,T_2\rangle$ for $T_1,T_2\in E(K)$ being two different points of order $2$ and $\Rk_K(E)\geq 1$. Let $H\subseteq E^2$ be the subgroup generated by $(T_1,T_1)$, $(T_2,T_2)$, and $(T_1,T_2)$. Let $A=E^2/H$ and $\bar{\phi}:E^2\to A$ be the isogeny with kernel $H$. So, $\bar{\phi}$ has degree $8$ and $\ker \bar{\phi}\subseteq E^2[2]$. By the properties of the quotient, there exists a map $\phi:A\to E^2$ such that $\phi\circ\bar{\phi}=[2]$. Let $U'\in E(K)$ be a non-torsion point and $P=\bar{\phi}(U'+T_1,U')$. So, \[\phi(P)=\phi(\bar{\phi}(U'+T_1,U'))=2(U'+T_1,U')=(2U',2U').\] Hence, we are in the hypothesis of Theorem \ref{thm:main}. Following the definition of $G_n$ in \eqref{eq:Gn},
		\[
		G_1=\{V\in E(\overline{K})\mid \bar{\phi}(V+T_1,V)=O\}.
		\]
		As we proved in Lemma \ref{lemma:bij}, we have $G_1\subseteq E(\overline{K})[2]$. By definition, $\bar{\phi}(V,V)=O$ for each $V\in E(\overline{K})[2]$. Hence, $\bar{\phi}(V+T_1,V)=\bar{\phi}(T_1,O)\neq O$ since $(T_1,O)\notin H$. So, $G_1$ is empty.
	\end{remark}
	\begin{example}\label{ex}
		Let $A$ and $P$ be as in the previous remark. Let $S$ be the set of primes over $2$ and where $A$ has bad reduction. We compute $C_n(\A,P,S)$.
		
		Let $n$ be odd and $\p\notin S$. Then $\p\mid C_n(\A,P,S)$ if and only if $(nU'+nT_1,nU')=(nU'+T_1,nU')$ reduces to a point in $H$ modulo $\p$ since $nP=\bar{\phi}(nU'+nT_1,nU')$ and $\ker(\bar{\phi})=H$. Notice that every point $(R_1,R_2)\in H$ is such that $R_1-R_2$ is equal to $O$ or $T_1-T_2$. Since $nU'+T_1-nU'=T_1$, we have that $(nU'+T_1,nU')$ does not reduce to a point in $H$ modulo $\p$. So, $C_n(\A,P,S)=1$.
		
		Let $n$ be even and $\p\notin S$. So, $\p\mid C_n(\A,P,S)$ if and only if $(nU'+nT_1,nU')=(nU',nU')$ reduces to a point in $H$ modulo $\p$. Hence, $\p\mid C_n(\A,P,S)$ if and only if $nU'$ is a $2$-torsion point modulo $\p$ and then if and only if $2nU'$ reduces to the identity modulo $\p$. So, $C_n(\A,P,S)=C_n(E,2U',S)$. In conclusion,
		\[
		C_n(\A,P,S)=\begin{cases}
			1 \text{ if } 2\nmid n, \\
			C_n(E,2U',S) \text{ if } 2\mid n.
		\end{cases}
		\]
	\end{example}
    \begin{example}
    We make Example \ref{ex} explicit in one case. Let $E$ be defined by $y^2=x^3-20x-21$, let $U'=(-3,4)\in E(\Q)$ be a non-torsion point, and let $T_1=(-1,0)$ and $T_2=(5,0)$ be two $2$-torsion points in $E$. This is the curve 288.b3 in the LMFDB \cite{lmfdb}. We have that $E(\Q)$ is generated by $U'$, $T_1$, and $T_2$. Let $H\subseteq E^2$ be the subgroup generated by $(T_1,T_1)$, $(T_2,T_2)$, and $(T_1,T_2)$. Let $A=E^2/H$ and let $T=(T_1,O)\in A(\Q)$, that is the only rational non-trivial $2$-torsion point in $A$. Let $P=(U',U')+T\in A(\Q)$ and $S=\{2,3\}$. Following Example \ref{ex},
    \[
		C_n(\A,P,S)=\begin{cases}
			1 \text{ if } 2\nmid n, \\
			C_n(E,2U',S) \text{ if } 2\mid n.
		\end{cases}
		\]
        We show that the equality holds also for $S=\{\emptyset\}$.
    Let $p$ be equal to $2$ or $3$, and notice $T_1\equiv T_2 \mod p$ is a non-singular point. If $nP\equiv O\mod p$, then $(nU'+nT_1,nU')\in H$ modulo $p$ and so $nT_1=nU'+nT_1-nU'$ must be equal to the identity modulo $p$. This happens if and only if $n$ is even. If $n$ is even, then $nP=(nU',nU')$. If $p=2$, then $nU'\equiv O\mod p$ for all $n$ even and so $nP\equiv O\mod p$. If $p=3$, then $nU'\notin H$ modulo $p$ for $n\equiv 1,2\mod 3$ and $nU'\equiv O\mod p$ for $n\equiv 0\mod p$. We conclude that
      \[
		C_n(\A,P,\{\emptyset\})=\begin{cases}
			1 \text{ if } 2\nmid n, \\
			C_n(E,2U',\{\emptyset\}) \text{ if } 2\mid n.
		\end{cases}
		\]
    We compute the first few terms.
    \begin{center}
	\begin{tabular}{|c|c| c|}
				\hline
				$n$&$C_n(\A,P,\{\emptyset\})$&$C_n(E,2U',\{\emptyset\})$\\
				\hline
				$1$&$1$&$2$ \\
				\hline
				$2$&$2\cdot 5\cdot 11\cdot 13$&$2\cdot 5\cdot 11\cdot 13$ \\
				\hline
				$3$&$1$&$ 2\cdot 3 \cdot7 \cdot17\cdot 19\cdot 23\cdot 263$ \\
				\hline
				$4$&$2 \cdot5 \cdot11\cdot 13 \cdot67 \cdot197 \cdot19249 \cdot21649$&$2 \cdot5 \cdot11\cdot 13 \cdot67 \cdot197 \cdot19249 \cdot21649$  \\
				\hline
				$5$&$1$ &$2\cdot 37\cdot 43\cdot 73 \cdot937\cdot 1583\cdot$ \\
                & &$\cdot 1867\cdot 2089\cdot 3041\cdot 21601$ \\
				\hline
				$6$&$2 \cdot3 \cdot5 \cdot7 \cdot11\cdot 13\cdot 17\cdot 19\cdot 23\cdot 191\cdot 251\cdot$ &
                $2 \cdot3 \cdot5 \cdot7 \cdot11\cdot 13\cdot 17\cdot 19\cdot 23\cdot 191\cdot 251\cdot$ \\
                &$\cdot263\cdot 311\cdot 16103\cdot 1786451\cdot 385044001$ &$\cdot263\cdot 311\cdot 16103\cdot 1786451\cdot 385044001$\\
				\hline
			\end{tabular}
		\end{center}
    \end{example}
	Now, we prove Corollary \ref{cor:primdiv}. To do that, we need a preliminary lemma. We will use the same notation of Section \ref{sec:proof}.
	\begin{lemma}\label{lemma:G1}
		Let $G_n$ be as in \eqref{eq:Gn}. Let $\alpha:E^m\to E^m$ be the isogeny that sends \[(P_1,\dots, P_m)\to (a_1P_1,\dots, a_mP_m).\] Then, $G_n$ is empty if and only if the following hold:
		\begin{itemize}
			\item $\alpha(\Delta[d_\phi])\subseteq\ker(\bar{\phi})$;
			\item $\bar{\phi}(nZ_1,\dots, nZ_m)\neq O$.
		\end{itemize}
		With $\Delta[d_\phi]$ we mean $\{(V,\dots, V)\in E^m(\overline{K})\mid V\in E(\overline{K})[d_\phi]\}$.
	\end{lemma}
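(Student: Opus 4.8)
\textbf{Proof plan for Lemma \ref{lemma:G1}.}
The plan is to prove the biconditional by unwinding the definition of $G_n$ and analyzing when no solution $V$ can exist. Recall from the proof of Lemma \ref{lemma:bij} that every element of $G_n$ automatically lies in $E(\overline{K})[d_\phi]$; the same computation shows that any candidate $V$ must be $d_\phi$-torsion. So throughout I may restrict attention to $V\in E(\overline{K})[d_\phi]$, which is what makes the finite-group condition $\alpha(\Delta[d_\phi])\subseteq\ker(\bar\phi)$ the natural object to consider.

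First I would rewrite the defining condition. By additivity of $\bar\phi$ and the definition of $\alpha$, the equation $\bar\phi(a_1V+nZ_1,\dots,a_mV+nZ_m)=O$ is equivalent to
\[
\bar\phi\bigl(\alpha(V,\dots,V)\bigr)=-\,\bar\phi(nZ_1,\dots,nZ_m).
\]
Thus $G_n$ records exactly the preimages under the homomorphism $V\mapsto \bar\phi(\alpha(V,\dots,V))$, from $E(\overline K)[d_\phi]$ to $E^m$, of the fixed element $-\bar\phi(nZ_1,\dots,nZ_m)$. In particular $G_n$ is a coset of the subgroup $G_0=\ker\bigl(V\mapsto\bar\phi(\alpha(V,\dots,V))\bigr)$ whenever it is nonempty, consistent with Lemma \ref{lemma:Gd}. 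This reduces the problem to a statement about the image of this single homomorphism.

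The two directions then follow from tracking this image. For the ``if'' direction: assuming $\alpha(\Delta[d_\phi])\subseteq\ker(\bar\phi)$, the map $V\mapsto\bar\phi(\alpha(V,\dots,V))$ is identically $O$ on $E[d_\phi]$, so the right-hand side $-\bar\phi(nZ_1,\dots,nZ_m)$ lies in the image only if it equals $O$; the second hypothesis says it does not, whence $G_n=\emptyset$. For the ``only if'' direction I would argue contrapositively: if either hypothesis fails, I produce a solution. If $\bar\phi(nZ_1,\dots,nZ_m)=O$, then $V=O$ already lies in $G_n$. If instead $\alpha(\Delta[d_\phi])\not\subseteq\ker(\bar\phi)$, the homomorphism $V\mapsto\bar\phi(\alpha(V,\dots,V))$ has nontrivial image inside the $d_\phi$-torsion of $E^m$, and I must show this image is large enough to contain $-\bar\phi(nZ_1,\dots,nZ_m)$.

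The main obstacle is precisely this last point: nontriviality of the image does not by itself force it to contain the specific element $-\bar\phi(nZ_1,\dots,nZ_m)$. The key observation I expect to use is that $\bar\phi(nZ_1,\dots,nZ_m)$ is itself constrained, because $Z_i\in E[d_\phi]$ and $(nZ_1,\dots,nZ_m)$ differs from $\alpha$ applied to a diagonal element only by an element of $\ker\bar\phi$; indeed \eqref{eq:U'P} was built so that $\bar\phi(\alpha(nU')+n_2\underline Z)$ is genuinely a point of $A$, forcing compatibility between the diagonal image and the $Z_i$. Concretely, I would show that $\bar\phi(nZ_1,\dots,nZ_m)$ always lies in the image of $V\mapsto\bar\phi(\alpha(V,\dots,V))$ on $E[d_\phi]$ whenever that image is nontrivial, by exhibiting an explicit diagonal preimage coming from the divisibility relations among the $Z_i$ and the structure of $\ker\bar\phi$; once that containment is established, solvability of the coset equation is immediate and the contrapositive is complete.
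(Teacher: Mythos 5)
Your reduction of the problem is the same as the paper's: restrict to $V\in E(\overline K)[d_\phi]$, rewrite the defining equation of $G_n$ as $\bar\phi(\alpha(V,\dots,V))=-\bar\phi(nZ_1,\dots,nZ_m)$, and note that the easy cases go through (if $\alpha(\Delta[d_\phi])\subseteq\ker\bar\phi$ and $\bar\phi(nZ_1,\dots,nZ_m)\neq O$ then $G_n=\emptyset$; if $\bar\phi(nZ_1,\dots,nZ_m)=O$ then $O\in G_n$). The gap is that the one remaining implication, which is the entire content of the lemma, is never proved: when $\alpha(\Delta[d_\phi])\not\subseteq\ker\bar\phi$ you must show that $-\bar\phi(nZ_1,\dots,nZ_m)$ lies in the image of $\psi\colon V\mapsto\bar\phi(a_1V,\dots,a_mV)$, and your proposal only promises to do this ``by exhibiting an explicit diagonal preimage coming from the divisibility relations among the $Z_i$ and the structure of $\ker\bar\phi$'', without exhibiting anything. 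At exactly this point the paper argues that $\bar\phi(E^m(\overline K)[d_\phi])$ is a $1$-dimensional $\Z/d_\phi\Z$-module, so that for any $V$ with $\bar\phi(a_1V,\dots,a_mV)\neq O$ there is an $i$ with $-i\bar\phi(a_1V,\dots,a_mV)=\bar\phi(nZ_1,\dots,nZ_m)$, whence $iV\in G_n$; your writeup contains no substitute for that step.

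Moreover, the claim you defer cannot be proved, because it is false: your own warning that ``nontriviality of the image does not by itself force it to contain the specific element'' is precisely the obstruction, and \eqref{eq:U'P} does not remove it. Take $m=2$, $a_1=a_2=1$, fix a basis $e_1,e_2$ of $E[4]$, let $\lambda\colon E^2[4]\to\Z/4\Z$ be given by $\lambda(x_1e_1+x_2e_2,\,y_1e_1+y_2e_2)=x_1+y_1+2y_2$, and let $\bar\phi\colon E^2\to A:=E^2/\ker\lambda$ be the quotient isogeny, so that $d_\phi=4$ and $\ker\bar\phi=\ker\lambda$. Choose $W_0$ with $4W_0=R$ non-torsion, put $\uu{Q}'=(W_0,W_0+e_1)$, $U'=W_0$, and $P=\bar\phi(\uu{Q}')$; then all hypotheses of Theorem \ref{thm:main} hold, $u=1$, and $\uu{Z}=(O,e_1)$. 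Under $\lambda$ the image of $\psi$ corresponds to $2\Z/4\Z\neq 0$, so $\alpha(\Delta[d_\phi])\not\subseteq\ker\bar\phi$, yet $\lambda(V+Z_1,V+Z_2)=2V_1+2V_2+1$ is odd for every $V\in E[4]$, so $G_1=\emptyset$; and since $\uu{Z}$ is well defined modulo $\ker\bar\phi+\alpha(\Delta[d_\phi])$, no other choice of $\uu{Q}'$ or $U'$ repairs this. So your plan stalls on a step that genuinely fails. (The same example shows that the paper's own inference is problematic as literally written: here the image of $\psi$ corresponds to $\{0,2\}$ and $\bar\phi(\uu{Z})$ to $1$, and a nonzero element of a cyclic group need not have every element among its multiples, so ``dimension $1$'' does not by itself produce the required $i$. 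Your caution located the real difficulty; neither you nor the quoted argument resolves it.)
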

	\begin{proof}
		Recall that $G_n\subseteq E(\overline{K})[d_\phi]$, that is a $2$-dimensional module over $\Z/d_\phi\Z$. Note that $\ker(\bar{\phi})$ is a $(2m-1)$-dimensional sub-module of $E^m(\overline{K})[d_\phi]$ and that $\alpha(\Delta[d_\phi])$ is a $2$-dimensional sub-module. Moreover, the image of $\bar{\phi}(E^m(\overline{K})[d_\phi])$ has dimension $1$.
		
		Assume that $G_n$ is empty. If $\bar{\phi}(nZ_1,\dots, nZ_m)= O$, then $O\in G_n$, contradiction. So, $\bar{\phi}(nZ_1,\dots, nZ_m)\neq O$. 
		If $\alpha(\Delta[d_\phi])\not\subseteq\ker(\bar{\phi})$, then there is $V\in E[d_\phi]$ such that $\bar{\phi}(a_1V,\dots, a_mV)\neq O$. 
		The image of $\bar{\phi}(E^m(\overline{K})[d_\phi])$ has dimension $1$ and so there is $i\leq d_\phi$ such that \[-i\bar{\phi}(a_1V,\dots, a_mV)=\bar{\phi}(nZ_1,\dots, nZ_m).\] So, $iV\in G_n$ and $G_n$ is not empty, contradiction. 
		Therefore, $\alpha(\Delta[d_\phi])\subseteq\ker(\bar{\phi})$.
		
		Now, we prove the only if statement. We have
		\begin{align*}
			\bar{\phi}(a_1V+nZ_1,\dots, a_mV+nZ_m)&=\bar{\phi}(\alpha(V,\dots ,V))+\bar{\phi}(nZ_1,\dots, nZ_m)\\&=\bar{\phi}(nZ_1,\dots, nZ_m)\\&\neq O
		\end{align*}
		for each $V\in E(\overline{K})[d_\phi]$ and then $G_n$ is empty.
	\end{proof}
	\begin{proof}[Proof of Corollary \ref{cor:primdiv}]
		Notice that, if $S$ and $S'$ are two finite sets of primes in $K$, then, by definition, $C_n(\A,P,S)$ has a primitive divisor for all but finitely many $n$ if and only if $C_n(\A,P,S')$ does. So, we have to prove the corollary only for the set of primes $S$ as in Theorem \ref{thm:main}. 
		
		By \cite[Proposition 10]{silverman} and Theorem \ref{thm:main}, $C_n(\A,P,S)$ has a primitive divisor for all but finitely many $n$ if and only if $n_1=1$, where $n_1$ is defined in the statement of Theorem \ref{thm:main}. 
		
		As we noted at the beginning of Section \ref{sec:ex}, this happens if and only if $Q_j-a_jU=O$ for each $j$ and $G_1$ is not empty (see Equation \eqref{eq:Gn}).
		
		Assume that $C_n(\A,P,S)$ has a primitive divisor for all but finitely many $n$. Therefore, $\phi(P)=(a_1U,\dots,a_mU)$. Moreover, $G_1$ is non-empty and then, by Lemma \ref{lemma:G1}, $\alpha(\Delta[d_\phi])\not\subseteq\ker(\bar{\phi})$ or $\bar{\phi}(Z_1,\dots, Z_m)= O$. In the second case, 
		\[
		\bar{\phi}(a_1U',\dots, a_mU')=\bar{\phi}(a_1U'+Z_1,\dots, a_mU'+Z_m)= P
		\]
		by \eqref{eq:U'P}. Assume that $\bar{\phi}(Z_1,\dots, Z_m)\neq  O$ and then $\alpha(\Delta[d_\phi])\not\subseteq\ker(\bar{\phi})$. Since $\bar{\phi}(E^m[d_\phi])$ has dimension $1$ as $\Z/d_\phi\Z$-module, there exists $V\in E(\overline{K})[d_\phi]$ such that \[\bar{\phi}(a_1V,\dots, a_m V)=\bar{\phi}(Z_1,\dots, Z_m).\] So,
        \[
        \bar{\phi}(a_1(U'+V),\dots, a_m(U'+V))=\bar{\phi}(a_1U'+Z_1,\dots, a_mU'+Z_m)= P.
        \]
        In both cases, we can find a point $U'$ (or $U'+V)$ such that
        \[
        \bar{\phi}(a_1U',\dots, a_mU')=P
        \]
        and we are done.
		
		Now, we prove the only if statement. By hypotheses, $Z_j=O$ for each $j$ and then $G_1$ is not empty (by Lemma \ref{lemma:G1}). So, $C_n(\A,P,S)$ is equal to an elliptic divisibility sequence and then $C_n(\A,P,S)$ has a primitive divisor for all but finitely many $n$.
	\end{proof}
	\normalsize
	\baselineskip=17pt
	\bibliographystyle{plain}
	\bibliography{bibliohy}
	Stefan Bara\'{n}czuk, Faculty of Mathematics and Computer Science, Adam Mickiewicz University in Poznań, ul. Uniwersytetu Poznańskiego 4,
	61-614, Poznań, Poland\\
	\textit{E-mail address}: \url{stefbar@amu.edu.pl}\\
	Bartosz Naskręcki, Faculty of Mathematics and Computer Science, Adam Mickiewicz University in Poznań, ul. Uniwersytetu Poznańskiego 4,
	61-614, Poznań, Poland;\\
	Institute of Mathematics, Polish Academy of Sciences, ul. Jana i Jędrzeja Śniadeckich 7, 00-654 Warszawa, Poland\\
	\textit{E-mail address}: \url{bartosz.naskrecki@amu.edu.pl}\\
	Matteo Verzobio, IST Austria, Am Campus 1, Klosterneuburg, Austria\\
	\textit{E-mail address}: \url{matteo.verzobio@gmail.com}\\
\end{document}